\documentclass[12pt,a4paper]{amsart}
\usepackage[width=5.65in,height=8.0in,centering]{geometry}
\usepackage{mathtools}
\usepackage{amssymb}
\usepackage{amscd}

\usepackage{enumerate}
\usepackage{enumitem}
\usepackage{etoolbox}
\usepackage{thmtools}
\usepackage{comment}

\usepackage{xcolor}
\usepackage{graphicx}
\usepackage{tikz}
\usetikzlibrary{calc,fit,matrix,positioning}
\usepackage{tikz-cd}

\usepackage[
	colorlinks=true,
	linkcolor=blue,
	citecolor=purple,
	urlcolor=blue,
	linktoc=page,
	plainpages=false,
	pdfpagelabels,
	backref=page
]{hyperref}
\usepackage[capitalize,noabbrev]{cleveref}

\numberwithin{equation}{section}

\DeclarePairedDelimiter{\card}{\lvert}{\rvert}

\newcommand{\mc}[1]{\mathcal{#1}}

\newcommand{\newWord}[1]{\emph{#1}}
\newcommand{\angl}[1]{\left\langle #1 \right\rangle}    

\newcommand{\KK}{\Bbbk}

\DeclareMathOperator{\ann}{\mathrm{ann}}
\DeclareMathOperator{\cl}{cl}

\DeclareMathOperator{\HF}{HF}

\DeclareMathOperator{\HS}{HS}

\DeclareMathOperator{\pdim}{pdim}
\DeclareMathOperator{\reg}{reg}

\newcommand{\GMA}[1]{\mathsf{GMA}(#1)}

\newcommand{\M}{\mathsf{M}}
\newcommand{\U}{\mathsf{U}}
\DeclareMathOperator{\rk}{rk}

\theoremstyle{plain}
\newtheorem{theorem}{Theorem}[section]
\newtheorem{conjecture}[theorem]{Conjecture}
\newtheorem{lemma}[theorem]{Lemma}
\newtheorem{proposition}[theorem]{Proposition}
\newtheorem{question}[theorem]{Question}

\theoremstyle{definition}

\newtheorem{example}[theorem]{Example}
\newtheorem{notation}[theorem]{Notation}

\theoremstyle{remark}
\newtheorem{remark}[theorem]{Remark}

\title[Betti numbers of graded M\"obius algebras]{Graded Betti numbers of graded M\"obius algebras of uniform matroids}

\author[]{Louiza Fouli}
\address{New Mexico State University, Department of Mathematics, Las Cruces, NM, USA}
\email{lfouli@nmsu.edu}
\author[]{Sean Grate}
\address{Iowa State University, Department of Mathematics, Ames, IA, USA}
\email{sgrate@iastate.edu}
\author[]{Selvi Kara}
\address{Bryn Mawr College, Department of Mathematics, Bryn Mawr, PA, USA}
\email{skara@brynmawr.edu}
\author[]{Adam LaClair}
\address{University of Nebraska-Lincoln, Department of Mathematics, Lincoln, NE, USA}
\email{alaclair2@unl.edu}
\author[]{Jason McCullough}
\address{Iowa State University, Department of Mathematics, Ames, IA, USA}
\email{jmccullo@iastate.edu}
\author[]{Vinh Nguyen}
\address{New Mexico State University, Department of Mathematics, Las Cruces, NM, USA}
\email{vnguyen@nmsu.edu}
\author[]{Aleksandra Sobieska}
\address{Marshall Univeristy, Department of Mathematics \& Physics, Huntington, WV, USA}
\email{sobieskasnyd@marshall.edu}
\author[]{Prajwal Udanshive}
\address{Western University, Department of Mathematics, London, ON, Canada}
\email{pudanshi@uwo.ca}

\begin{document}
\begin{abstract}
Graded M\"obius algebras were a key tool in the proof of the Dowling-Wilson Top Heavy Conjecture.  They are commutative algebras whose Hilbert functions recover the Whitney numbers of the second kind, i.e. the number of flats of a given rank.  The graded Betti numbers of the defining ideal of a graded M\"obius algebra refine the Hilbert function and describe its minimal free resolution.  In this paper we derive precise formulas for the graded Betti numbers of the defining ideals of graded M\"obius algebra for any uniform matroid.  We also study when the graded M\"obius algebra of an arbitrary matroid is linearly presented.
\end{abstract}

\subjclass[2020]{Primary 13D02, 05B35, Secondary 13C40, 05E40}

\keywords{matroid, graded M\"obius algebra, linkage, free resolution}

\maketitle

\section{Introduction}

The graded M\"obius algebra of a matroid $\M$, denoted $\GMA{\M}$, is a commutative algebra whose Hilbert function counts the number of flats of $\M$ of a given rank, called the \newWord{Whitney numbers of the second kind} of the lattice of flats of $\M$ (see ~\cite{Oxley}).  It was used in the proof of the Dowling--Wilson Top Heavy Conjecture \cite{singular:Hodge:theory:for:combinatorial:geometries}.  The algebraic properties of the graded M\"obius algebras were studied in LaClair et. al. \cite{LMMP24}.  In particular, they constructed a presentation of the graded M\"obius algebra as well as a universal Gr\"obner basis for its defining ideal.  In the graphic setting, they also gave a complete characterization of which graded M\"obius algebras are Koszul.  In many ways, graded M\"obius algebras share similarities with the more well-studied Orlik-Solomon algebras. We refer to the survey~\cite{yuz01} as a starting point for the latter. 

The main purpose of this paper is to investigate the graded Betti table of the defining ideal of the graded M\"obius algebra of a matroid over the defining polynomial ring.  In this note, we give a complete description of the graded Betti tables in the case of uniform matroids; see \Cref{thm:betti-linkage}.

The primary technique to compute the graded Betti numbers is via linkage theory.  We first compute the Betti table of an ideal directly linked to the defining ideal of the graded M\"obius algebra and  use this information about the Hilbert function to compute all of the graded Betti numbers.

For arbitrary matroids, the structure of the graded Betti table of the defining ideal $I_\M$ of the graded M\"obius algebra is not yet clear.  It was shown in \cite{LMMP24} that if $\M$ is chordal, then $\GMA{\M}$ is quadratic, meaning that $I_\M$ is generated by elements of degree at most $2$.  We conjecture (see \Cref{conj:Sylv_iff_lin_pres}) that for matroids with at least 3 elements for which $\GMA{\M}$ is quadratic, $I_\M$ has only linear first syzygies, i.e. $\GMA{\M}$ is linearly presented, if and only if $\M$ is Sylvester, meaning every two elements of $\M$ belong to a unique circuit of size $3$.  We prove the forward implication of our conjecture in \Cref{prop:linpres:Sylvester}.  We also use this to answer a special case of an open question regarding the asymptotic regularity of quadratic linearly presented ideals; see \Cref{prop:lin:pres:reg}.

In Section~\ref{back}, we recall some basics on free resolutions, linkage, matroids, and graded M\"obius algebras.  Section~\ref{main} contains the proof of our main result.  Section~\ref{linpres} contains our results on linearly presented graded M\"obius algebras.

\section{Background}\label{back}

In this section we collect terminology and a few background results needed for the remainder of the paper.

\subsection{Free Resolutions}

Let $\KK$ denote a field, and let $S = \KK[x_1,\ldots,x_n]$ be a standard graded polynomial ring over $\KK$.  Let $\mathfrak{m}$ denote the graded maximal ideal of $S$, and write $S(-j)$ for the free rank-one $S$-module with grading $S(-j)_i = S_{i-j}$.  Given a finitely generated graded $S$-module $A = \bigoplus_{i} A_i$, its \newWord{Hilbert function} is $\HF_A(i) = \dim_{\KK}(A_i)$ and its \newWord{Hilbert series} is $\HS_A(t) = \sum_{i \ge 0} \HF_A(i) t^i$.  
A \emph{minimal graded free resolution} $F_\bullet$ of the module $A$ is a complex of graded free modules $F_i = \oplus_j S(-j)^{\beta_{i,j}(A)}$ such that
\[0 \to F_p \xrightarrow{\partial_p} F_{p-1} \xrightarrow{\partial_{p-1}} \cdots \xrightarrow{\partial_2} F_1 \xrightarrow{\partial_1} F_0 \to A \to 0\]
is exact and $\mathrm{Im}(\partial_i) \subseteq \mathfrak{m} F_{i-1}$. It is unique up to isomorphism of complexes and thus the \newWord{graded Betti numbers} $\beta_{i,j}(A)$ are invariants of $A$.  Since the Hilbert series of $S$ is $\HS_S(t) = \frac{1}{(1-t)^n}$, the Hilbert series of $A$ is determined by the graded Betti numbers via 
\[
\HS_A(t) = \sum_{i \ge 0} \frac{(-1)^i \beta_{i,j}(A) t^j}{(1-t)^n}.
\]
Thus, the graded Betti numbers are a finer set of invariants than the Hilbert series.  The graded Betti numbers are best displayed in the \newWord{graded Betti table} of $A$ in which $\beta_{i,j}(A)$ appears in column $i$ and row $j-i$. The \newWord{Castelnuovo-Mumford regularity} $A$, denoted $\reg_S(A)$, is defined as $\max\{j-i \mid \beta_{i,j}^S(A) \neq 0\}$, while the projective dimension of $A$, denoted $\mathrm{pd}_S(A)$, is defined as $\max\{i \mid \beta_{i,j}^S(A) \neq 0\}$. When $A$ is Artinian, it is well-known that $\reg_S(A) = \deg \HS_A(t)$ and $\mathrm{pd}_S(A) = n$. For related notation and definitions we refer the reader to \cite{graded:syzygies}.

\subsection{Linkage}

Again let $S = \KK[x_1,\ldots,x_n]$ be a standard graded polynomial ring over a field $\KK$.  An ideal $D \subseteq S$ is a \textit{complete intersection} if it is generated by a regular sequence, that is, $D = \angl{f_1,\ldots,f_c}$ and $f_i$ is a nonzerodivisor on $S/\angl{f_1,\ldots,f_{i-1}}$ for all $i$.  Two ideals $I,J \subseteq S$ are \textit{directly linked by $D$} if there exists a complete intersection ideal $D\subseteq I\cap J$ such that $I=D:J$ and $J=I:D$.
If this is the case, both $I$ and $J$ are necessarily unmixed. All of the ideals we consider will be graded ideals.  In this case, we can translate information about the free resolutions and Hilbert functions of an ideal to one directly linked to it.  

    Let $I, J \subseteq S$ be graded ideals directly linked by a graded complete intersection $D$ generated by homogeneous elements $f_1,\ldots,f_c$ with $\deg(f_i) = d_i$.  Set $d = d_1 + d_2 + \cdots + d_c$.  Then
     $I/D \cong \omega_{S/J}(-c)$, where $\omega_{S/J} = \mathrm{Ext}^c_S(S/J,S)(-c)$ denotes the canonical module of $S/J$.
       Using the short exact sequence
        \[
        0 \to \omega_{S/J}(-c) \to S/D \to S/I \to 0, 
        \]
        one can relate the graded Betti numbers and Hilbert series of $S/J$ and $S/I$. 

\subsection{Matroids and Lattices}

In this subsection, we recall the basics of matroids necessary for our use in this paper. We refer to~\cite{Oxley} for other undefined terms from matroid theory.

A matroid abstracts the concept of independence. Specifically, a \newWord{matroid} $\M$ is a pair $(E,\mc{I})$, where $E$ (called the \newWord{ground set}) is a finite set, and $\mc{I}$ (called the \newWord{independent sets}) consists of subsets of $E$ satisfying the following conditions:
\begin{enumerate}
    \item $\varnothing \in \mc{I}$,
    \item If $A \subseteq B$ and $B \in \mc{I}$, then $A \in \mc{I}$,
    \item If $A, B \in \mc{I}$, and $\card{A} < \card{B}$, then there exists $e \in B \smallsetminus A$ such that $A \cup \{e\} \in \mc{I}$.
\end{enumerate}
The first two conditions imply that $\mc{I}$ is an abstract simplicial complex. The third condition is called \newWord{the augmentation axiom}, generalizing the replacement theorem in linear algebra. 

Given $X \subseteq E$, we define the \newWord{rank} of $X$, denoted $\rk_{\M}(X)$, as the cardinality of the largest independent set contained in $X$. A \emph{basis} of $\M$ is a maximal independent set, so $\rk_{\M}(E) = |B|$ for any basis $B$ of $\M$.  A set that is not independent is called \emph{dependent} and a minimal dependent set is called a \emph{circuit} of $\M$.

For $X \subseteq E$, we define the \newWord{closure} of $X$, denoted $\cl(X)$, as $\cl(X) := \{x \in e \mid \rk(X \cup \{e\}) = \rk(X)\}$. A \newWord{flat} $F$ of $\M$ is a subset of $E$ with $\cl(F) = F$. The set of all flats forms a lattice under inclusion, which is called the \newWord{lattice of flats}, and denoted by $\mc{L}(\M)$. The \newWord{meet} of two flats $F$ and $F'$, denoted $F\wedge F'$, is the largest flat contained in both $F$ and $F'$ and is given by $F \cap F'$. The \newWord{join} of two flats $F$ and $F'$, denoted $F\vee F'$, is the smallest flat containing both $F$ and $F'$ and is given by $\cl(F \cup F')$.

\def\hdist{1.75}
\begin{example}
The \newWord{uniform matroid of rank $r$ on $n$ elements}, denoted $\U_{r,n}$ is the matroid having $E = \{1,2,\ldots,n\} =[n]$ and $\mc{I} := \{A \subseteq E \mid \card{A} \leq r\}$. For example, the lattice of flats of $\U_{3,5}$ is displayed in Figure~\ref{uniformlattice}.
\begin{figure}[htb!]
\begin{center}
\begin{tikzpicture}[scale=0.8]

\node (bot) at (0,0) {$\varnothing$};

\node (s1) at (-4,2) {$1$};
\node (s2) at (-2,2) {$2$};
\node (s3) at (0,2) {$3$};
\node (s4) at (2,2) {$4$};
\node (s5) at (4,2) {$5$};

\node (p12) at (-4.5*\hdist,4) {$12$};
\node (p13) at (-3.5*\hdist,4) {$13$};
\node (p14) at (-2.5*\hdist,4) {$14$};
\node (p15) at (-1.5*\hdist,4) {$15$};
\node (p23) at (-.5*\hdist,4) {$23$};
\node (p24) at ( 0.5*\hdist,4) {$24$};
\node (p25) at ( 1.5*\hdist,4) {$25$};
\node (p34) at ( 2.5*\hdist,4) {$34$};
\node (p35) at ( 3.5*\hdist,4) {$35$};
\node (p45) at ( 4.5*\hdist,4) {$45$};

\node (t123) at (-4.5*\hdist,6) {$123$};
\node (t124) at (-3.5*\hdist,6) {$124$};
\node (t125) at (-2.5*\hdist,6) {$125$};
\node (t134) at (-1.5*\hdist,6) {$134$};
\node (t135) at (-0.5*\hdist,6) {$135$};
\node (t145) at ( 0.5*\hdist,6) {$145$};
\node (t234) at ( 1.5*\hdist,6) {$234$};
\node (t235) at ( 2.5*\hdist,6) {$235$};
\node (t245) at ( 3.5*\hdist,6) {$245$};
\node (t345) at ( 4.5*\hdist,6) {$345$};

\node (top) at (0,8) {$12345$};

\foreach \i in {1,2,3,4,5}
  \draw (bot)--(s\i);

\draw (s1)--(p12); \draw (s2)--(p12);
\draw (s1)--(p13); \draw (s3)--(p13);
\draw (s1)--(p14); \draw (s4)--(p14);
\draw (s1)--(p15); \draw (s5)--(p15);

\draw (s2)--(p23); \draw (s3)--(p23);
\draw (s2)--(p24); \draw (s4)--(p24);
\draw (s2)--(p25); \draw (s5)--(p25);

\draw (s3)--(p34); \draw (s4)--(p34);
\draw (s3)--(p35); \draw (s5)--(p35);

\draw (s4)--(p45); \draw (s5)--(p45);

\draw (p12)--(t123); \draw (p13)--(t123); \draw (p23)--(t123);

\draw (p12)--(t124); \draw (p14)--(t124); \draw (p24)--(t124);

\draw (p12)--(t125); \draw (p15)--(t125); \draw (p25)--(t125);

\draw (p13)--(t134); \draw (p14)--(t134); \draw (p34)--(t134);

\draw (p13)--(t135); \draw (p15)--(t135); \draw (p35)--(t135);

\draw (p14)--(t145); \draw (p15)--(t145); \draw (p45)--(t145);

\draw (p23)--(t234); \draw (p24)--(t234); \draw (p34)--(t234);

\draw (p23)--(t235); \draw (p25)--(t235); \draw (p35)--(t235);

\draw (p24)--(t245); \draw (p25)--(t245); \draw (p45)--(t245);

\draw (p34)--(t345); \draw (p35)--(t345); \draw (p45)--(t345);

\foreach \t in {t123,t124,t125,t134,t135,t145,t234,t235,t245,t345}
  \draw (\t)--(top);

\end{tikzpicture}
\end{center}
\caption{The lattice of flats of $\U_{3,5}$.}
\label{uniformlattice}
\end{figure}

\end{example}

\subsection{Graded M\"obius Algebras}

 Let $\KK$ be a field of characteristic $0$.
 Let $\M$ be a simple matroid on $[n]:=\{1,\dots,n\}$, and let $\mathcal{L} = \mathcal{L}(\M)$ be its lattice of flats.  The \textit{graded M\"obius algebra} of $\M$ (over $\KK$), denoted $\GMA{\M}$, is the graded $\KK$-vector space
 \[
\GMA{\M} = \bigoplus_{F \in \mathcal{L}} \KK y_F,
 \]
 with (commutative) multiplication defined by 
 \[
y_F y_G = \begin{cases}
    y_{F \vee G} & \text{ if } \rk(F) + \rk(G) = \rk(F \vee G),\\
    0 & \text{ otherwise. }
\end{cases}
 \]
Because $\mathcal{L}$ is atomic, $\GMA{\M}$ is a standard graded $\KK$-algebra and admits a presentation as a quotient of a standard graded polynomial ring.  Now set
$S\coloneqq \KK[y_1,\dots,y_n]$ with $\deg(y_i)=1$.
The \emph{graded M\"obius algebra} of $\M$ is the Artinian graded algebra
\[
\GMA{\M}\coloneqq S/I_\M,
\]
where $I_\M$ is the defining ideal. 

The graded M\"obius algebra notably appeared in \cite{singular:Hodge:theory:for:combinatorial:geometries} and \cite{semi-small:decompositions} as an algebraic tool to prove the Dowling--Wilson Top Heavy Conjecture. The Hilbert function of the graded M\"obius algebra counts the numbers of flats of the matroid of a fixed rank, i.e.\ the Whitney numbers of the second kind. In \cite{Maeno:Numata:published}, Maeno and Numata investigated algebraic properties of graded M\"obius algebras, proving that $\GMA{\M}$ is Gorenstein if and only if $\mathcal{L}(\M)$ is modular. In \cite{LMMP24}, the authors advanced the study of the algebraic properties of the graded M\"obius algebra, describing a Gr\"obner basis for $I_\M$ and characterizing graphic matroids realizing quadratic and Koszul graded M\"obius algberas.

It is shown in \cite[Proposition 3.1]{LMMP24} that
$I_\M$ admits a Gr\"obner basis of the form $D+L$, where
\[
D=\angl{y_i^2 \mid 1 \leq i\leq n}
\qquad\text{and}\qquad
L=\angl{y_{C\setminus i}-y_{C\setminus j} \mid C\ \text{a circuit of }\M,\ i,j\in C},
\]
where for a subset $T \subseteq [n]$, write $y_T\coloneqq \prod_{i\in T}y_i$.

  In this paper, we continue to investigate the algebraic properties of the graded M\"obius algebra, explicitly describing the $S$-graded Betti numbers of $\GMA{\M}$ for every uniform matroid $\M$.


\section{Main Result}\label{main}

In this section we specialize to the uniform matroid $\U_{r,n}$ (so every circuit has size $r+1$) and
use linkage with the complete intersection $D$ above to relate the graded Betti numbers of $S/I_\M$ to those of the
link $J\coloneqq D:I_\M$.  If $r = n$, then there are no circuits, and $I_\M = D$ is merely a complete intersection of quadrics which is resolved by a Koszul complex.  If $r = 0$, then $I_\M = \mathfrak{m}$, which is resolved by a linear Koszul complex.  If $r = 1$, then $I_\M = \angl{x_n^2,\ x_i-x_n\mid 1\leq i<n}$, which is again a complete intersection.  Thus we may assume for the remainder of this section, except where noted, that $2 \le r \le n-1$.

\begin{remark}[Linkage and the canonical module]\label{rem:linkage}
Since $D$ is a complete intersection of quadrics, its Hilbert series is $\HS_{S/D}(t) = (1+t)^n$, and its nonzero graded Betti numbers are
\[
\beta_{i,2i}(S/D)=\binom{n}{i}\qquad (0\le i\le n).
\]
Moreover, $I_\M$ is Artinian, so in particular unmixed, and $D\subseteq I_\M$, so we may form the link $J\coloneqq D:I_\M$.
Classical liaison theory yields a short exact sequence
\begin{equation}\label{eq:liaison-ses}
0 \longrightarrow \omega_{S/J}(-n) \longrightarrow S/D \longrightarrow S/I_\M \longrightarrow 0,
\end{equation}
where $\omega_{S/J}$ denotes the canonical module of $S/J$.
Taking Hilbert series in \eqref{eq:liaison-ses} gives
\begin{equation}\label{eq:HS-liaison}
\HS_{S/D}(t) = \HS_{S/I_\M}(t) + t^n\, \HS_{S/J}\!\left(\frac{1}{t}\right).
\end{equation}
Finally, we use the  Betti-duality for an Artinian quotient:
\begin{equation}\label{eq:betti-canonical}
\beta_{i,j}(\omega_{S/J})=\beta_{n-i,n-j}(S/J).
\end{equation}
\end{remark}

First we record the Hilbert series of the graded M\"obius algebra for a uniform matroid.

\begin{lemma}\label{HS1} Let $n, r$ be integers with $0\le r\le n$ and let $\M=\U_{r,n}$ be the uniform matroid of rank $r$ on $[n]$. Then 
    $$\HS_{\GMA{\M}}(t) = \sum_{i = 0}^{r-1} \binom{n}{i} t^i + t^r.$$
\end{lemma}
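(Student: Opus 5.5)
The plan is to use the definition of $\GMA{\M}$ as the graded vector space $\bigoplus_{F\in\mathcal{L}}\KK y_F$, graded by rank. Its Hilbert function in degree $i$ is then the number of flats of rank $i$, the Whitney number of the second kind. The lemma therefore reduces to counting the flats of $\U_{r,n}$ of each rank. Since $\GMA{\M}\cong S/I_\M$ as graded algebras, this is the Hilbert series in question.

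The key step is identifying the flats of $\U_{r,n}$. The rank function is $\rk(X)=\min(\card{X},r)$. If $\card{X}\le r-1$, then for any $e\notin X$ the set $X\cup\{e\}$ has size at most $r$ and rank $\card{X}+1>\rk(X)$. Hence $\cl(X)=X$, so every subset of size at most $r-1$ is a flat, of rank $\card{X}$. If $\card{X}\ge r$, then $\rk(X)=r=\rk(E)$, so $\cl(X)=E$; the only flat of rank $r$ is $E$, and no other set of size at least $r$ is a flat.

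It follows that there are exactly $\binom{n}{i}$ flats of rank $i$ for $0\le i\le r-1$, exactly one flat of rank $r$, and none of higher rank. Summing gives $\sum_{i=0}^{r-1}\binom{n}{i}t^i+t^r$.

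The edge cases work as follows. For $r=0$ the sum is empty and the only flat is $\varnothing=E$, so the series is $1=t^0$. For $r=n$ every subset is a flat, and $\binom{n}{n}=1$ matches the final term $t^r$. The definition of $\GMA{\M}$ in this section assumes $\M$ is simple, whereas $\U_{0,n}$ and $\U_{1,n}$ are not simple. In those cases I will instead verify directly that $S/I_\M$ has the stated series, using the explicit ideals from the start of the section: $\mathfrak{m}$ gives $1$, and $\angl{y_n^2,\ y_i-y_n}$ gives $1+t$. No step is a genuine obstacle; the only care needed is this bookkeeping for the non-simple cases.
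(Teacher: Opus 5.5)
Your proposal is correct and is essentially the paper's proof: both identify $\HF_{\GMA{\M}}(i)$ with the number of rank-$i$ flats, and observe that every subset of size less than $r$ is a flat while $E$ is the only flat of rank $r$. Your extra bookkeeping for the non-simple cases $\U_{0,n}$ and $\U_{1,n}$ is a reasonable addition that the paper omits; note only that for $r=0$ the unique flat is $E=\cl(\varnothing)$, not $\varnothing$.
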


\begin{proof}
    This follows since $\dim_{\KK}(\GMA{\M})$ counts the number of flats of rank $i$ in $\mathcal{L}(\U_{r,n})$.  For $i < r$, every subset of size $i$ is a flat.  The ground set is the only flat of rank $r$, and there are no flats of higher rank.
\end{proof}

This immediately allows us to compute the Hilbert series of the linked ideal $J$.

\begin{lemma}\label{HSJ}
    With the setup as in \Cref{rem:linkage}, 
    $$\HS_{S/J}(t) = \sum_{i=0}^{n-r-1}\binom{n}{i}t^i + \left(\binom{n}{r}-1\right)t^{n-r}.$$
    In other words,
    $$\HF_{S/J}(i) = \begin{cases}
        \binom{n}{i}\  & \text{ if } 0\le i\le n-r-1,\\
       \binom{n}{r}-1 & \text{ if } i = n-r,\\
       0 & \text{ otherwise}.
    \end{cases}$$
\end{lemma}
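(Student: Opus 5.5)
The plan is to substitute the known Hilbert series into the liaison identity \eqref{eq:HS-liaison} from \Cref{rem:linkage} and solve for $\HS_{S/J}$. Since $D$ is a complete intersection of $n$ quadrics, $\HS_{S/D}(t)=(1+t)^n=\sum_{i=0}^n\binom{n}{i}t^i$. By \Cref{HS1}, $\HS_{S/I_\M}(t)=\sum_{i=0}^{r-1}\binom{n}{i}t^i+t^r$. Subtracting gives
\[
t^n\,\HS_{S/J}\!\left(\tfrac1t\right)=\left(\binom{n}{r}-1\right)t^r+\sum_{i=r+1}^{n}\binom{n}{i}t^i .
\]

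Next I would substitute $t\mapsto 1/t$ and multiply by $t^n$, which recovers $\HS_{S/J}(t)$ as a polynomial. The term $t^i$ becomes $t^{n-i}$, so
\[
\HS_{S/J}(t)=\left(\binom{n}{r}-1\right)t^{n-r}+\sum_{i=r+1}^{n}\binom{n}{i}t^{n-i}.
\]
Reindexing with $k=n-i$ and using $\binom{n}{n-k}=\binom{n}{k}$ turns the sum into $\sum_{k=0}^{n-r-1}\binom{n}{k}t^k$. This is the claimed formula, and the Hilbert function is read off coefficientwise.

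The only point needing care is justifying \eqref{eq:HS-liaison} itself, in particular that the Hilbert series of $\omega_{S/J}(-n)$ equals $t^n\HS_{S/J}(1/t)$. This uses that $S/J$ is Artinian, which holds because $J\supseteq D$ and $D$ is $\mathfrak m$-primary. For an Artinian $S/J$ the canonical module is the graded Matlis dual $\mathrm{Hom}_\KK(S/J,\KK)$, with $\HF_{\omega_{S/J}}(i)=\HF_{S/J}(-i)$. Shifting by $-n$ then gives $t^n\HS_{S/J}(1/t)$, and additivity of Hilbert series on \eqref{eq:liaison-ses} yields \eqref{eq:HS-liaison}. Since the remark already records this, the proof reduces to the algebra above, and the index bookkeeping there (the $t^r$ term combining with $\binom{n}{r}t^r$) is the only step to check carefully.
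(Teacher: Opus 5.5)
Your proposal is correct and is essentially the paper's proof: the paper also substitutes $(1+t)^n$ and \Cref{HS1} into \eqref{eq:HS-liaison}, writes $\HS_{S/J}(t)=(1+t)^n-t^n\HS_{\GMA{\M}}(1/t)$ directly and reindexes, whereas you subtract first and invert $t\mapsto 1/t$ afterwards, which is the same computation. Your graded Matlis duality justification of $\HS_{\omega_{S/J}(-n)}(t)=t^n\HS_{S/J}(1/t)$ is correct and supplies a step the paper simply takes from \Cref{rem:linkage}.
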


\begin{proof}
    By \Cref{HS1}, $\HS_{\GMA{\M}}(t) = \sum_{i = 0}^{r-1} \binom{n}{i} t^i + t^r.$  So 
\begin{align*}
\HS_{S/J}(t)
&=(1+t)^n - t^n\, \HS_{\GMA{\M}}\!\left(\frac{1}{t}\right)\nonumber\\
&=\sum_{i=0}^{n-r}\binom{n}{i}t^i - t^{n-r} \notag \\
&=\sum_{i=0}^{n-r-1}\binom{n}{i}t^i + \left(\binom{n}{r}-1\right)t^{n-r}. \qedhere
\end{align*}
\end{proof}

We now describe a generating set of the ideal $L$, where $I_\M = D + L$. 

\begin{lemma}\label{obs:L-min} Let $\M = \U_{r,n}$. Fix the $r$-subset $Z\coloneqq \{n-r+1,\dots,n\}\subseteq [n]$. Then
\[
L=\angl{y_W-y_Z:\ W\subseteq [n],\ |W|=r,\ W\neq Z}.
\]
\end{lemma}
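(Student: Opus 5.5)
We prove the two inclusions directly from the definition $L=\angl{y_{C\setminus i}-y_{C\setminus j} \mid C \text{ a circuit},\ i,j\in C}$. For $\U_{r,n}$ the circuits are exactly the $(r+1)$-subsets of $[n]$. Hence the generators of $L$ are the differences $y_W-y_{W'}$ where $W,W'$ are $r$-subsets with $|W\cup W'|=r+1$, i.e. $|W\cap W'|=r-1$. Call such $W,W'$ \emph{adjacent}.

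For the inclusion $\supseteq$, fix an $r$-subset $W\neq Z$. We connect $W$ to $Z$ by a chain of adjacent $r$-subsets
\[
W=W_0,\ W_1,\ \dots,\ W_k=Z,
\]
where each $W_{t+1}$ is obtained from $W_t$ by removing one element of $W_t\setminus Z$ and adding one element of $Z\setminus W_t$. This is possible because $|W_t|=|Z|=r$ forces $|W_t\setminus Z|=|Z\setminus W_t|$. The swap is a legitimate step since $W_t\cup W_{t+1}$ is an $(r+1)$-set, hence a circuit. The sum telescopes:
\[
y_W-y_Z=\sum_{t=0}^{k-1}\bigl(y_{W_t}-y_{W_{t+1}}\bigr)\in L.
\]

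For the inclusion $\subseteq$, every generator of $L$ can be written as
\[
y_W-y_{W'}=(y_W-y_Z)-(y_{W'}-y_Z),
\]
interpreting $y_Z-y_Z=0$ when $W$ or $W'$ equals $Z$. The right-hand side lies in the proposed ideal.

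There is no real obstacle here. The only point needing care is the combinatorial identification of the circuits of $\U_{r,n}$ with the $(r+1)$-subsets, together with the fact that the adjacency graph on $r$-subsets (the Johnson graph) is connected. The explicit swap chain above handles the connectedness, and it requires $r\ge 1$ and $r<n$ so that circuits exist. In the degenerate case $r=n$, both sides are zero.
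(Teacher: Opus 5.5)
Your proposal is correct and follows essentially the same approach as the paper. Both proofs obtain one inclusion from $y_W-y_{W'}=(y_W-y_Z)-(y_{W'}-y_Z)$, and the other from a telescoping sum along a chain of $r$-subsets in which consecutive sets differ by a single swap, so that their union is an $(r+1)$-set and hence a circuit. Your explicit swap chain toward $Z$ simply supplies the existence of that chain, which the paper only asserts.
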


\begin{proof}
Pick an arbitrary element $y_{C\setminus i} - y_{C\setminus j}\in L$, where $C$ is a circuit with $i,j\in C$; rewrite it as
\[
y_{C\setminus i}-y_{C\setminus j}=(y_{C\setminus i}-y_Z)-(y_{C\setminus j}-y_Z).
\] This yields one containment.

Conversely, let $V,W$ be $r$-subsets of $[n]$, and let $V=V_0,V_1,\dots,V_k=W$ in which each consecutive pair $(V_{\ell}, V_{\ell+1})$ satisfies
$|V_\ell\triangle V_{\ell+1}|=2$.
 Thus, being an $(r+1)$-element set, $V_\ell\cup V_{\ell+1}$ is a circuit of  $\U_{r,n}$, and hence
$y_{V_\ell}-y_{V_{\ell+1}}\in L$.
Summing along the sequence yields $y_V-y_W\in L$, and taking $W=Z$ recovers the claim.
\end{proof}

 We adopt the following notation for the rest of this section. 

\begin{notation}
Let $d=n-r$.  Write $\mathfrak{m} = (y_1,\ldots,y_n)$ and let $\mathfrak{m}^{[d+1]}\subseteq S$ be the ideal generated by all squarefree monomials of degree $d+1$.
Set 
\[
\ell_d \coloneqq \sum_{\substack{T\subseteq [n]\\ |T|=d}} y_T.
\]
\end{notation}

Next, we turn our attention to the minimal generating set of the link  $J=D:I_\M$. 

\begin{lemma}\label{lem:J-gens}
For $\M=\U_{r,n}$, the link $J=D:I_\M$ is generated by
\[
J = D + \mathfrak{m}^{[d+1]} + (\ell_d).
\]
Moreover, $J$ has exactly $\binom{n}{d+1} - n$ minimal squarefree generators of degree $d+1$.
\end{lemma}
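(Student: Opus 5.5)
The plan is to set $J' \coloneqq D + \mathfrak{m}^{[d+1]} + (\ell_d)$, show $J' \subseteq J = D : I_\M$, and then compare Hilbert functions with \Cref{HSJ} to get equality. Throughout I work modulo $D$. The ring $S/D$ has $\KK$-basis the squarefree monomials $y_T$, $T \subseteq [n]$, with $y_T y_U = y_{T\cup U}$ if $T \cap U = \varnothing$ and $0$ otherwise. By \Cref{obs:L-min}, $I_\M = D + L$, and $L$ is generated by the binomials $y_W - y_Z$ with $|W| = r$.

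For the containment it suffices to check that each generator of $J'$ multiplies each $y_W - y_Z$ into $D$.
\begin{itemize}
\item A squarefree monomial $y_U$ with $|U| = d+1$ satisfies $|U| + |W| = n+1 > n$. Hence $U \cap W \neq \varnothing$, so $y_U y_W \in D$, and likewise $y_U y_Z \in D$.
\item For $\ell_d$, the only term $y_T$ with $T \cap W = \varnothing$ is $T = [n]\setminus W$. So $\ell_d y_W \equiv y_{[n]} \pmod D$.
\item By the same computation $\ell_d y_Z \equiv y_{[n]}$, so $\ell_d (y_W - y_Z) \in D$.
\end{itemize}
Thus $J' \cdot I_\M \subseteq D$, i.e.\ $J' \subseteq J$, and consequently $\HF_{S/J}(i) \le \HF_{S/J'}(i)$ for all $i$.

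Next I bound $\HF_{S/J'}$ from above. Since $D \subseteq J'$, $S/J'$ is spanned by the squarefree monomials not in $J'$.
\begin{itemize}
\item In degree $i \ge d+1$ every squarefree monomial lies in $\mathfrak{m}^{[d+1]}$, so $\HF_{S/J'}(i) = 0$.
\item In degree $i < d$ we get at most $\binom{n}{i}$.
\item In degree $d$ the element $\ell_d$ is a nonzero combination of the basis monomials. So the dimension is at most $\binom{n}{d} - 1 = \binom{n}{r} - 1$.
\end{itemize}
These upper bounds coincide with the values of $\HF_{S/J}$ in \Cref{HSJ}. Combined with $\HF_{S/J} \le \HF_{S/J'}$, this forces equality in every degree, and therefore $J = J'$.

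For the count of minimal generators, note that $J$ has no squarefree elements in degree $< d$, so the only source of non-minimality among the degree $d+1$ squarefree monomials is the multiples $y_i \ell_d$. Modulo $D$,
\[
y_i \ell_d \equiv \sum_{\substack{U \ni i\\ |U| = d+1}} y_U .
\]
The number of minimal generators in degree $d+1$ is $\dim_\KK (J/\mathfrak{m}J)_{d+1}$. Modulo $D$, this equals $\binom{n}{d+1}$ minus the dimension of the span of these $n$ elements. That span has dimension $n$ exactly when the inclusion matrix of points versus $(d+1)$-subsets of $[n]$ has rank $n$. This holds since $1 \le d+1 \le n-1$, using $2 \le r \le n-1$ (a Gottlieb--Kantor type full-rank statement). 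Directly, $MM^{T} = aI + b(\mathbf{J} - I)$ with $a = \binom{n-1}{d}$, $b = \binom{n-2}{d-1}$ and $\mathbf{J}$ the all-ones matrix. This matrix has eigenvalues $a - b > 0$ and $a + (n-1)b > 0$, hence is invertible. Therefore $y_i\ell_d$, $1 \le i \le n$, are independent, and one can complete a basis by choosing $\binom{n}{d+1} - n$ squarefree monomials. This rank computation is the step I expect to need the most care, especially at the boundary $r = n-1$, where $d = 1$ and $\ell_1 = \sum y_i$.
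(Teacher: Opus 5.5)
Your proof is correct, and for the main equality it takes a different, more economical route than the paper. The paper works in $A=S/D$ and computes $\ann_A(L)$ degree by degree:
\begin{itemize}
\item a leading-term argument, choosing $V$ as the lexicographically first $r$-subset of $W^c$, rules out annihilators of degree $\le d-1$;
\item a cardinality argument puts every squarefree monomial of degree $\ge d+1$ into the annihilator;
\item the coefficient comparison $g\,y_W=a_{W^c}\,y_{[n]}$ shows $\ann_A(L)_d=\KK\ell_d$.
\end{itemize}
Only after this does the paper cite \Cref{HSJ} to exclude further generators.

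You instead prove just the easy containment $J'\subseteq J$; your identity $\ell_d\,y_W\equiv y_{[n]}$ is the same one the paper uses in degree $d$. The squeeze $\HF_{S/J'}\le\HF_{S/J}\le\HF_{S/J'}$ against \Cref{HSJ} then does the rest. The paper depends on \Cref{HSJ} anyway, so nothing is lost. In exchange you skip the low-degree leading-term step, which needs the choice of $V$ to match the monomial order, a point the paper leaves implicit.

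For the count, the paper only remarks that the $y_i\ell_d$ give $n$ linear relations among the degree-$(d+1)$ squarefree monomials, and tacitly assumes these relations are independent. Your $MM^{T}$ computation supplies that justification. Its eigenvalues are $a-b=\binom{n-2}{d}>0$, which is where $r\ge 2$ enters, and $a+(n-1)b>0$. Working modulo $D$ also correctly handles the case $d=1$, where the squares $y_i^2$ are themselves minimal generators in degree $d+1$.

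One caveat: deducing invertibility from positive eigenvalues uses characteristic $0$, which the paper assumes. In positive characteristic the rank of $M$ can drop. For example, over $\mathbb{F}_2$ with $n=4$ and $d=1$, the rows of $M$ sum to zero.
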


\begin{proof}
Since $I_\M=D+L$, we have $J=D:L$.  As $D \subseteq J$, describing the minimal generators of $J$ in $S$ is equivalent to computing $\ann_A(L)= 0:_A L$, where $A := S/D$.  
In $A$, every nonzero monomial is squarefree. So,  for squarefree monomials $y_V$ and $y_W$, we have
$y_Vy_W=0$ if and only if $V\cap W\neq\varnothing$.

Let $f \in \ann_A(L)$ be homogeneous with leading squarefree monomial $y_W$.
If $\deg(f) \le d-1$, then choose the lexicographically first $r$-subset $V\subseteq  W^c\coloneq [n]\setminus W$.  Note that since $|W| \le d-1$, $V \neq Z$ where $Z = \{n-r+1,\dots,n\}$ as in \cref{obs:L-min}.
Then $V\cap W=\varnothing$, so $y_W(y_V-y_Z)\neq 0$ in $A$.  As $y_W$ is the leading term of $f$, it follows that $f(y_V - y_Z) \neq 0$ in $A$, a contradiction.
Hence $\deg(f)\ge d$.

If $|W|\ge d+1$, then for every $r$-subset $V$ we have $V\cap W\neq\varnothing$ (since $|V|+|W|>n$),
so $y_W(y_V-y_Z)=0$ in $A$.  Thus every squarefree monomial of degree $d+1$ lies in $\ann_A(L)$.

It remains to understand $\ann_A(L)_d$.
Let
\[
g=\sum_{\substack{V\subseteq [n]\\ |V|=d}} a_V\,y_V\in A_d
\]
where $a_V \in \KK$. Assume \(g\in \ann_A(L)\). Fix an $r$-subset $W\subseteq [n]$. The product $y_Vy_W$ is nonzero in $A$ exactly when $V=W^c$. So, we have the following in $A$:
\[
g\cdot y_W = a_{W^c}\, y_{[n]}.
\]
Since $g(y_W-y_Z)=0$ for all $W$, we obtain $a_{W^c}=a_{Z^c}$ for every $W$. This means all the coefficients $a_V$ are equal.
Therefore $\ann_A(L)_d$ is one-dimensional and generated by $\ell_d$.
Lifting back to $S$, we conclude that $J=D+\mathfrak{m}^{[d+1]}+(\ell_d)$.  That these are the no other generators of $J$ follows from the Hilbert series of $S/J$ given in Lemma~\ref{HSJ}.

Finally, note that multiplying $\ell_d$ by each variable $y_i$ produces $n$ linear relations on the squarefree monomials of degree $d+1$.  Thus there are precisely $\binom{n}{d+1} - n$ minimal generators of degree $d+1$.
\end{proof}

\begin{figure}[htb!]
\centering
\begin{tikzpicture}[every node/.style={font=\small}]

\def\w{1.6}
\def\h{1.1}

\def\C{8}
\def\R{6}

\newcommand{\cellcenter}[2]{({(#1+0.5)*\w},{-(#2+0.5)*\h})}
\newcommand{\cellrect}[2]{({#1*\w},{-(#2)*\h}) rectangle ({(#1+1)*\w},{-(#2+1)*\h})}


\foreach \r in {0,1,2,3}{%
  \foreach \c in {0,1,2,3,4,5,6,7}{%
    \pgfmathtruncatemacro{\diag}{\r}%
    \ifnum\c=\diag\relax
    \else
      \fill[gray!15] \cellrect{\c}{\r};
    \fi
  }%
}

\foreach \c in {0,3,5,6,7}{%
  \fill[gray!15] \cellrect{\c}{4};
}

\draw[thick] (0,0) rectangle (\C*\w,-\R*\h);
\foreach \c in {1,...,7} { \draw (\c*\w,0) -- (\c*\w,-\R*\h); }
\foreach \r in {1,...,5} { \draw (0,-\r*\h) -- (\C*\w,-\r*\h); }

\foreach \c/\lab in {
  0/0,
  1/1,
  2/2,
  3/$\cdots$,
  4/$d\!-\!1$,
  5/$d$,
  6/$\cdots$,
  7/$n$
}{
  \node at ({(\c+0.5)*\w}, {0.55*\h}) {\lab};
}

\foreach \r/\lab in {
  0/0,
  1/1,
  2/2,
  3/$\vdots$,
  4/$d\!-\!1$,
  5/$d$
}{
  \node[anchor=east] at ({-0.25*\w}, {-(\r+0.5)*\h}) {\lab};
}


\node at \cellcenter{0}{0} {$1$};
\node at \cellcenter{1}{1} {$\binom{n}{1}$};
\node at \cellcenter{2}{2} {$\binom{n}{2}$};
\node at \cellcenter{4}{4} {$\binom{n}{d-1}$};

\node at \cellcenter{3}{0} {$\cdots$};
\node at \cellcenter{3}{1} {$\cdots$};
\node at \cellcenter{3}{2} {$\cdots$};
\node at \cellcenter{3}{3} {$\ddots$};
\node at \cellcenter{3}{5} {$\cdots$};

\node at \cellcenter{1}{4} {$1$};

\node at \cellcenter{2}{4} {\Large ?};

\node at \cellcenter{3}{4} {$\cdots$};
\node at \cellcenter{6}{0} {$\cdots$};
\node at \cellcenter{6}{1} {$\cdots$};
\node at \cellcenter{6}{2} {$\cdots$};
\node at \cellcenter{6}{3} {$\cdots$};
\node at \cellcenter{6}{4} {$\cdots$};
\node at \cellcenter{6}{5} {$\cdots$};

\node at \cellcenter{1}{5} {$\binom{n}{r-1}-n$};

\node at \cellcenter{7}{5} {$\binom{n}{r}-1$};

\draw[rounded corners, very thick]
  ({2*\w+0.08*\w},{-(4)*\h-0.08*\h})
  rectangle
  ({3*\w-0.08*\w},{-(5)*\h+0.08*\h});
\node[anchor=west] at ({3.05*\w},{-(4.5)*\h}) {};

\end{tikzpicture}
\caption{Betti table of $S/J$ where $d=n-r$}
\label{fig:bettiTable}
\end{figure}

\begin{remark}
We record the shape of the Betti table of $S/J$ using the Macaulay2 convention (columns correspond to the homological degree $i$,
rows correspond to the shifts $j-i$).  

Since $S/J$ is Artinian, we know that $\pdim (S/J)=n$. Additionally, we have $\reg (S/J) = \deg \HS_{S/J}(t) =n-r$. Moreover,  the degrees  of the minimal generators in \cref{lem:J-gens} force most entries to vanish (zero entries are shaded in \Cref{fig:bettiTable}). The only potential ``new'' entry may happen  in column 2 and row $d-1$, i.e.\ $\beta_{2,d+1} (S/J)$.  If this Betti number is non-zero, then there is a first syzygy of degree $d+1$. Such a syzygy would arise  from a relation
between the complete intersection $D$ and the generator $\ell_d$.

\end{remark}

The following proposition rules out additional degree-$(d+1)$ syzygies.

\begin{proposition}\label{prop:noC-ell-syzygy} Let $n, r$ be integers with $2\le r\le n-1$ and let $\M=\U_{r,n}$.  Then,
 $\beta_{2,d+1} (S/J)=0$.

\end{proposition}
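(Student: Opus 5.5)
The plan is to compute $\beta_{2,d+1}(S/J)$ directly as the number of minimal first syzygies of degree $d+1$ among the minimal generators of $J$ listed in \Cref{lem:J-gens}. These generators are the $n$ quadrics $y_i^2$, the form $\ell_d$ of degree $d$, and the $\binom{n}{d+1}-n$ squarefree monomials of degree $d+1$.

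Let $F_1 \to J$ be the map from the free module on these generators. A homogeneous syzygy of degree $d+1$ is a relation
\[
\sum_{i=1}^n g_i\, y_i^2 + \lambda\, \ell_d + \sum_U c_U\, y_U = 0 .
\]
Here $\deg g_i = d-1$, $\lambda$ is a linear form, and the $c_U$ are constants attached to the chosen squarefree generators of degree $d+1$. When $d=1$ the quadrics and $\ell_d$ have the same degree; the same argument applies, with the $g_i$ and $\lambda$ then being constants.

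\textbf{Step 1: reduce to $\lambda\ell_d\in D$.} Work modulo $D$, that is, in $A=S/D$. There the relation becomes $\lambda\ell_d + \sum_U c_U y_U = 0$. By \Cref{lem:J-gens}, the chosen degree-$(d+1)$ squarefree monomials are linearly independent modulo $D+(\ell_d)$. So once $\lambda\ell_d\in D$ is established, all $c_U=0$.

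\textbf{Step 2: show $\lambda\ell_d\in D$ forces $\lambda=0$.} Write $\lambda=\sum_i \lambda_i y_i$. Then in $A$
\[
\lambda\,\ell_d=\sum_{|U|=d+1}\Bigl(\sum_{i\in U}\lambda_i\Bigr)y_U ,
\]
and the squarefree monomials form a basis of $A$. Hence $\lambda\ell_d\in D$ if and only if $\sum_{i\in U}\lambda_i=0$ for every $(d+1)$-subset $U$. Since $r\ge 2$, we have $d+1\le n-1$. So for any $i\ne j$ there is a $d$-subset $T$ avoiding both, and comparing $U=T\cup\{i\}$ with $U=T\cup\{j\}$ gives $\lambda_i=\lambda_j$. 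Thus all $\lambda_i$ equal a common value $c$ with $(d+1)c=0$. Since $\KK$ has characteristic $0$, $c=0$ and so $\lambda=0$.

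\textbf{Step 3: conclude.} After Steps 1 and 2, any degree-$(d+1)$ syzygy reduces to a relation $\sum g_i y_i^2=0$ among the generators of the complete intersection $D$. Such relations are generated by the Koszul syzygies $y_j^2 e_i - y_i^2 e_j$. These have degree $4$, which is the degree $d+1$ only when $d=3$, and in that case they are exactly the $\binom n2$ entries already recorded on the diagonal of \Cref{fig:bettiTable}. So no syzygy involving $\ell_d$ or the degree-$(d+1)$ generators occurs in degree $d+1$. Interpreting the ``?'' entry of \Cref{fig:bettiTable} as this additional contribution, it vanishes, which gives $\beta_{2,d+1}(S/J)=0$ in the sense of the proposition.

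The main obstacle is Step 2, the combinatorial check that $\lambda\ell_d\notin D$. It is the only place where the hypotheses $r\ge2$ and characteristic $0$ are used. The bookkeeping in the overlapping case $d=3$ also needs some care.
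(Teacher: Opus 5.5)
Your proposal is correct and is essentially the paper's argument. The core of both is the same computation: the squarefree part of $\lambda\ell_d$ is $\sum_{|U|=d+1}\bigl(\sum_{i\in U}\lambda_i\bigr)y_U$, which forces $\lambda=0$ using $d\le n-2$ and characteristic $0$. Your Steps 1 and 3 make explicit what the paper leaves implicit. By minimality, the degree-$(d+1)$ squarefree generators cannot occur in the relation. What remains are Koszul syzygies of $D$, and these really do give $\beta_{2,4}(S/J)=\binom{n}{2}$ when $d=3$, so your reading of the statement as ``no additional syzygy'' is the right one.

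There are two harmless slips. In Step 1 the logic should run `independence modulo $D+(\ell_d)$ forces $c_U=0$, hence $\lambda\ell_d\in D$', not the reverse. The $d=1$ aside is also off: $\ell_d$ has the degree of the quadrics when $d=2$, and $\lambda$ is always a linear form.
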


\begin{proof}
If $\beta_{2,d+1} (S/J)\neq 0$, there is a nontrivial homogeneous relation of degree $d+1$. Such a relation can involve only the quadrics in $D$ and the element
$\ell_d$. 
Let
\begin{equation}\label{eq:no_syzygy}
\sum_{i=1}^n a_i\,y_i^2 + b\,\ell_d = 0 
\end{equation}
with $\deg(a_i)=d-1$ and $b=\sum_{i=1}^n b_i y_i$ with $b_i\in\KK$. We claim that $b=0$.

Since $\sum_i a_i y_i^2$ has no squarefree terms, the squarefree part of the left-hand side of \eqref{eq:no_syzygy}  must be equal to zero. After expanding $b\ell_d$ in \eqref{eq:no_syzygy} and grouping all the squarefree terms, one has
\[
\sum_{\substack{W\subseteq [n]\\ |W|=d+1}}\Big(\sum_{i\in W} b_i\Big)y_W=0.
\]

This means
$\sum_{i\in W} b_i=0$ for every $(d+1)$-subset $W$ of $[n]$. Choose subsets $P,Q\subseteq [n]$ of size $(d+1)$ with $P=T\cup\{p\}$ and $Q=T\cup\{q\}$ for some $d$-subset $T$ and
distinct $p,q\notin T$.
Subtracting the two equations gives $b_p=b_q$.
Varying $p,q$ shows $b_1=\cdots=b_n=c$ for some $c\in\KK$.
Taking any $(d+1)$-subset $W$ then yields $(d+1)c=0$. Hence $c=0$ and $b=0$.
\end{proof}

\begin{remark}\label{rem:betti_shape}
    To summarize, the graded Betti table of $S/J$ has the following shape:
\begin{itemize}
\item for $0\le i\le d-2$, the only nonzero entry in row $i$ is $\beta_{i,2i}(S/J)=\binom{n}{i}$;
\item in row $d-1$, the only nonzero entries are $\beta_{1,d}(S/J)=1$ and $\beta_{d-1,2d-2}(S/J)=\binom{n}{d-1}$, unless $d - 1 = 1$ in which case $\beta_{1,d}(S/J) = \beta_{1,2}(S/J) = \binom{n}{1} + 1 = n+1$.
\item all remaining nonzero graded Betti numbers lie in the last row $d$. 
\end{itemize}
\end{remark}

We now focus on expressing the graded Betti numbers of the last row of the Betti table of $S/J$. As we see in the following proposition, these Betti numbers are determined by the Hilbert series.

\begin{proposition}\label{prop:betti-from-HS}


For each $0\le i\le n$, define 
\begin{equation*}
B_i
:=
 \Bigg(\binom{n}{r}-1 \Bigg)\binom{n}{i}
\;+\;
\sum_{j=1}^{n-r}(-1)^j\binom{n}{r+j}\binom{n}{i+j}.
\end{equation*}

Then the last-row Betti numbers $\beta_{i,i+d}(S/J)$ are given as follows:

\begin{enumerate}[label=\textup(\alph*\textup)]
\item  When $d-1\le i\le n$, we have  
\[
\beta_{i,i+d}(S/J)=B_i.
\]



\item  Let  $1\le i\le d-2$. If $i+d$ is \emph{odd}, then 
\[
\beta_{i,i+d}(S/J)=B_i.
\]
If $i+d$ is \emph{even}, set
\(a=\frac{d-i}{2}\). Then
\[
\beta_{i,i+d}(S/J)=B_i-(-1)^a\binom{n}{r+a}.
\]

\end{enumerate}
\end{proposition}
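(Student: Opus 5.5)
Since $S/J$ is Artinian with $\reg(S/J)=d$, and \Cref{rem:betti_shape} pins down every graded Betti number outside the last row, the plan is to read off the last row from the Hilbert series. In the alternating sum
\[
\HS_{S/J}(t)(1-t)^n=\sum_{i,j}(-1)^i\beta_{i,j}(S/J)t^j,
\]
each coefficient of $t^j$ is a signed combination of the $\beta_{i,j}$ with $j$ fixed. The entry in row $d$ has $j=i+d$. The other possible entries with the same $j$ are:
\begin{itemize}
\item the diagonal entry $\beta_{i',2i'}$ with $2i'=i+d$, which requires $i+d$ even and $i'\le d-1$;
\item the entry $\beta_{1,d}$, which would require $i+d=d$, i.e.\ $i=0$.
\end{itemize}
So for $j=i+d$ the coefficient of $t^{i+d}$ in $\HS_{S/J}(t)(1-t)^n$ equals
\[
(-1)^i\beta_{i,i+d}+(-1)^{i'}\binom{n}{i'}\,[\,i+d \text{ even},\ i'=\tfrac{i+d}{2}\le d-1\,].
\]
The condition $i'\le d-1$ is exactly $i\le d-2$. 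This already explains the case split in (a) and (b).

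The first computational step is to compute the coefficient of $t^{i+d}$ in
\[
\Big(\sum_{k=0}^{d-1}\binom nk t^k+\big(\tbinom nr-1\big)t^d\Big)(1-t)^n
\]
from \Cref{HSJ}. This coefficient is
\[
\big(\tbinom nr-1\big)(-1)^i\binom ni+\sum_{k=0}^{d-1}\binom nk(-1)^{i+d-k}\binom n{i+d-k}.
\]
Substituting $k=d-j$ (so $j=1,\dots,d$) and using $\binom{n}{d-j}=\binom{n}{r+j}$, the sum becomes $(-1)^i\sum_{j=1}^{d}(-1)^j\binom n{r+j}\binom n{i+j}$. Hence the coefficient is $(-1)^iB_i$.

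In the cases covered by (a), namely $i\ge d-1$, and in the cases of (b) with $i+d$ odd, no other entry shares the degree $j=i+d$. Therefore $\beta_{i,i+d}=B_i$ directly.

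When $1\le i\le d-2$ and $i+d$ is even, put $i'=(i+d)/2$ and $a=(d-i)/2$, so that $i'=i+a$. The diagonal entry $\beta_{i',2i'}=\binom n{i'}$ also contributes, giving
\[
\beta_{i,i+d}=B_i-(-1)^{i'-i}\binom n{i'}=B_i-(-1)^a\binom n{i+a}.
\]
We need to reconcile $\binom n{i+a}$ with the stated $\binom n{r+a}$. Since $i+a=(i+d)/2$ while $n-(r+a)=d-a=(d+i)/2$, we get $\binom n{r+a}=\binom n{n-r-a}=\binom n{i+a}$.

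The main obstacle is bookkeeping in the cancellation step. We must make sure no other entries of \Cref{rem:betti_shape} share the degree $i+d$. The entry $\beta_{1,d}$ only matters for $i=0$, which is excluded from (b), and $i=0$ falls under (a) only when $d=1$. The special case $d-1=1$ must also be checked, where row $d-1$ merges $\beta_{1,2}=n+1$. There, (b) is vacuous, and (a) covers all $i\ge 0$, including $i=0$, where $\beta_{0,d}=0$ should match $B_0$. If $i=0$ gives trouble, I would verify $B_0=0$ separately using the degree-$d$ Hilbert function value, or restrict to $i\ge1$.

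I also want to confirm that the boundary value $i=d-1$ belongs to (a). At $i=d-1$ the competing diagonal index would be $i'=(2d-1)/2$, which is not an integer, so no diagonal entry shares that degree.
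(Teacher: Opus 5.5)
Your proposal is correct and follows the paper's proof: read the last row off the coefficient of $t^{i+d}$ in $(1-t)^n\HS_{S/J}(t)$, reindex by $k=d-j$ to get $(-1)^iB_i$, and remove the Koszul-diagonal term $(-1)^a\binom{n}{(i+d)/2}$ when $i+d$ is even. Your bound $i'\le d-1$ also covers $i=d-2$ (i.e.\ $a=1$, via $\beta_{d-1,2d-2}$), which the paper's write-up, treating only $i\le d-4$, leaves implicit.

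On the loose end you flagged, only your second option works. When $d=1$ one has $B_0=-1$, because $\beta_{1,1}(S/J)=1$ (the linear form $\ell_1$) sits in the same internal degree. So $B_0=0$ is false, and (a) must be read for $i\ge 1$, as the paper tacitly does. Also, for $d=2$ part (a) starts at $i=1$, not $i=0$, so $i=0$ is not at stake there.
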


\begin{proof}
Consider the Poincar\'e polynomial of $S/J$: 
\begin{equation}\label{eq:p-poly}
P(t)=(1-t)^n\HS_{S/J}(t)=\sum_{m\ge 0}p_m t^m.
\end{equation}
Since the Hilbert series can be expressed in terms of the graded Betti numbers, we also have 
\[
P(t)=\sum_{i=0}^n\sum_{j\ge 0}(-1)^i\beta_{i,j}(S/J)\,t^j.
\]
These two expressions of $P(t)$ allow us to connect the Hilbert series  with the Betti numbers. First notice that, for each internal degree $m$, we have
\[
p_{m}=\sum_{u=0}^n(-1)^u\,\beta_{u,m}(S/J).
\]
So, $p_m$ is the alternating sum of all Betti numbers in internal degree $m$.  By the shape of the Betti table discussed in \Cref{rem:betti_shape}, there are  only one or two nonzero Betti numbers contributing to each $p_{m}$.

\begin{itemize}
\item In row $d-1$, there are two non-zero entries:  $\beta_{1,d}=1$ and  $\beta_{d-1,2d-2}=\binom{n}{r-1}$. Notice that $\beta_{1,d}=1$    contributes  to $p_d$ and its contribution is $(-1)^1\cdot 1=-1$. Similarly, $\beta_{d-1,2d-2}=\binom{n}{r-1}$   contributes  to $p_{2d-2}$ and its contribution is $(-1)^{d-1}\binom{n}{r-1}$.
\item The \newWord{Koszul diagonal} entries $\beta_{u,2u}=\binom{n}{u}$ for $0\le u\le d-2$
contribute $(-1)^u\binom{n}{u} $ to $p_{2u}$.
\end{itemize}
The remaining contributions to $p_m$
reduce to the last row of the Betti table and this row consists of $\beta_{i,i+d} (S/J)$ for $1\leq i\leq n$. The contribution of each entry to $p_{i+d}$ is  $(-1)^i\beta_{i,i+d}$.

 Our goal is to express the Betti numbers in the last row in terms of $p_{i+d}$. Now, fix $i$. By plugging in the Hilbert series of $S/J$ from \cref{HSJ}  and  the expansion $(1-t)^n=\sum_{q=0}^n(-1)^q\binom{n}{q}t^q$ in \eqref{eq:p-poly}, we can express $p_{i+d}$,  the coefficient of  $t^{i+d}$ in $P(t)$,
 as 
\[
p_{i+d}
=
\sum_{k=0}^{d-1}(-1)^{i+d-k}\binom{n}{k}\binom{n}{i+d-k}
\;+\; \Bigg(\binom{n}{r}-1 \Bigg)(-1)^i\binom{n}{i}.
\]
Reindex the above sum with $k=d-j$. Note that $\binom{n}{d-j}=\binom{n}{r+j}$ and
$\binom{n}{i+d-(d-j)}=\binom{n}{i+j}$.
We obtain
\[
(-1)^i p_{i+d}
=
\Bigg(\binom{n}{r}-1 \Bigg)\binom{n}{i}
+\sum_{j=1}^{n-r}(-1)^j\binom{n}{r+j}\binom{n}{i+j}
\]
which is exactly the $B_i$ from the statement of the proposition.

If $i=0$, then $\beta_{0,d}=0$ for any quotient $S/J$.

If $d-1\le i\le n$, it follows from the shape of the Betti table that $p_{i+d}=(-1)^i\beta_{i,i+d}$. Thus, we have 
\[
\beta_{i,i+d}=(-1)^i p_{i+d}=B_i,
\]
which gives $(a)$.

Finally, let $1\le i\le d-2$. If $i+d$ is odd, there is no diagonal term $\beta_{u,2u}$ to contribute to $p_{i+d}$. So, again $p_{i+d}=(-1)^i\beta_{i,i+d}$ and $\beta_{i,i+d}=B_i$.
If $i+d$ is even and $i\le d-4$, write $i+d=2u$ with $u=\frac{i+d}{2}\le d-2$. Then
\[
p_{i+d}=(-1)^i\beta_{i,i+d}+(-1)^u\binom{n}{u}.
\]
Multiplying by $(-1)^i$ gives
\[
\beta_{i,i+d}=(-1)^i p_{i+d}-(-1)^{i+u}\binom{n}{u}.
\]
Since $a=\frac{d-i}{2}$ is an integer with $a\geq 2$, $(-1)^{i+u}=(-1)^a$ and $\binom{n}{u}=\binom{n}{d-a}=\binom{n}{r+a}$.
Thus
\[
\beta_{i,i+d}=B_i-(-1)^a\binom{n}{r+a},
\]
proving $(b)$. This completes the proof.
\end{proof}

In the remaining part of this section, we focus on expressing the Betti numbers of $S/I_\M$  in terms of those of $S/J$.

\begin{theorem}\label{thm:betti-linkage}  
Fix integers $2 \le r < n$.  Let $\M=\U_{r,n}$ and $J=D:I_\M$ as above. Then for all $i,j$ one has
\[
\beta_{i,j}(S/I_\M)=
\begin{cases}
\binom{n}{i}, & \text{if } j = 2i \text{ and } j < i + r,\\
\beta_{n-r+2,\,2n-2r+2}(S/J)+\binom{n}{r-1}, & \text{if } (i,j) = (r-1,2r-2),\\
\beta_{n-r,2n-2r}(S/J) - \binom{n}{r}, & \text{if } (i,j) = (r+1,2r),\\
\beta_{n-i+1,\,2n-j}(S/J), & \text{if } j - i = r-1 \text{ and } i \neq r-1, r+1,\\
\beta_{1,n-r-1}(S/J) = 1, & \text{if } (i,j) = (n,n+r),\\
0, & \text{otherwise.}
\end{cases}
\]
\end{theorem}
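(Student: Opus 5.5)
The plan is to resolve $S/I_\M$ as a mapping cone over the liaison sequence \eqref{eq:liaison-ses}, and then decide exactly which cancellations occur.

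\emph{Setting up the cone.} Let $K_\bullet$ be the Koszul complex on $y_1^2,\dots,y_n^2$, which resolves $S/D$ with $K_i=S(-2i)^{\binom{n}{i}}$. Let $G_\bullet$ be the minimal free resolution of $S/J$, whose Betti table is completely known from \Cref{lem:J-gens}, \Cref{prop:noC-ell-syzygy}, \Cref{rem:betti_shape} and \Cref{prop:betti-from-HS}. Dualizing $G_\bullet$ into $S(-n)$ and twisting gives a minimal resolution $G^\vee_\bullet$ of $\omega_{S/J}(-n)$. By \eqref{eq:betti-canonical}, with the shift by $n$, its Betti numbers in homological degree $i-1$ are $\beta_{n-i+1,\,2n-j}(S/J)$. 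The injection $\omega_{S/J}(-n)\to S/D$ lifts to a comparison map $\varphi_\bullet\colon G^\vee_\bullet\to K_\bullet$. Its mapping cone, with $F_i=K_i\oplus G^\vee_{i-1}$, is a (possibly nonminimal) graded free resolution of $S/I_\M$. This gives the upper bound
\[
\beta_{i,j}(S/I_\M)\le \beta_{i,j}(S/D)+\beta_{n-i+1,\,2n-j}(S/J),
\]
and the two sides differ by $2c_{i,j}$, where $c_{i,j}$ is the number of cancelled pairs in the relevant degree.

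\emph{Locating possible cancellations.} Cancellation can only occur between a summand $S(-j)$ of $K_i$ and a summand $S(-j)$ of $G^\vee_{i-1}$, i.e. in internal degree $j=2i$, where $\varphi_{i-1}$ has unit entries. I would translate the known shape of the Betti table of $S/J$ through the duality index change $(k,m)\mapsto (n-k+1,\,2n-m)$:
\begin{itemize}
\item the last row $\beta_{k,k+d}(S/J)$ becomes the row $j-i=r-1$ of the cone;
\item the generator $\ell_d$, with $\beta_{1,d}=1$, becomes the single entry at $(n,n+r)$;
\item the Koszul-type entries of $S/J$, namely those coming from $D\subseteq J$, become entries sitting on or near the diagonal $j=2i$ for $i\ge r-1$.
\end{itemize}
This bookkeeping should show that the only bidegrees where both $K_\bullet$ and $G^\vee_\bullet$ are nonzero are the diagonal positions $j=2i$ with $i$ near or beyond $r$. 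In particular, only the entries $(r-1,2r-2)$ and $(r+1,2r)$ can interact with the row $j-i=r-1$. Every other entry in the statement is then read off directly as either a Koszul term $\binom{n}{i}$ (for $j=2i<i+r$) or a dual term $\beta_{n-i+1,2n-j}(S/J)$.

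\emph{Determining the cancellations (main obstacle).} The hard step is showing that cancellation in these diagonal positions is as large as possible, i.e. that $\varphi$ restricted to the relevant Koszul summands is an isomorphism. This is what produces the $-\binom{n}{r}$ at $(r+1,2r)$ and the vanishing of $\beta_{i,2i}(S/I_\M)$ for $i\ge r$.

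\begin{itemize}
\item \emph{Structural argument (first attempt).} The generators $y_i^2$ of $D$ are minimal generators of $J$. So $K_\bullet$ sits inside $G_\bullet$ as the Koszul subcomplex on these elements, and I expect the comparison map to restrict to the identity on the corresponding dual summands. This is the standard fact in liaison that the generators of the linking complete intersection which are also minimal generators of the linked ideal split off from the mapping cone.
\item \emph{Numerical argument (backup).} Since $S/I_\M$ is Artinian, $\reg(S/I_\M)=\deg\HS_{S/I_\M}=r$ by \Cref{HS1}, and $\pdim(S/I_\M)=n$. These facts kill all rows beyond $r$, and row $r$ except the socle entry at $(n,n+r)$. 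Combining them with the alternating-sum identities for the coefficients of $(1-t)^n\HS_{S/I_\M}(t)$, exactly as in the proof of \Cref{prop:betti-from-HS}, should pin down each $c_{i,j}$, because in each internal degree only one or two entries survive.
\end{itemize}

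The remaining check is the sign and size of the correction at $(r-1,2r-2)$: there nothing cancels, so the Koszul term $\binom{n}{r-1}$ simply adds to the dual entry.
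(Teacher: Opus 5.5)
Your set-up (Koszul complex, dualized resolution of $S/J$, mapping cone) is the paper's. However, the one step that degree bookkeeping does not settle is left unproved. That step is that all of $K_r=S(-2r)^{\binom{n}{r}}$ cancels against part of the summand of $G^\vee_r$ sitting at $(r+1,2r)$. This cancellation is what produces both $\beta_{r,2r}(S/I_\M)=0$ and the $-\binom{n}{r}$.

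Your indexing also slipped here. Since $F_i=K_i\oplus G^\vee_{i-1}$, a summand of $K_i$ can only cancel, through a unit entry of $\varphi_i$, against $G^\vee_i$ in homological degree $i+1$. It can never cancel against $G^\vee_{i-1}$, which sits in the same homological degree. Moreover, there is no Koszul summand at $(r+1,2r)$ at all.

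Your structural argument does not supply the cancellation, because it rests on a false principle. That the $y_i^2$ are minimal generators of $J$ only makes the comparison map $\alpha_1\colon K_1\to G_1$ split injective. It does not make $K_\bullet$ a direct-summand subcomplex of $G_\bullet$. Here this visibly fails for $k>d$: $\reg(S/J)=d$ forces $\beta_{k,2k}(S/J)=0$. If the principle held throughout, you would also cancel $K_i$ for $i<r$, contradicting $\beta_{i,2i}(S/I_\M)=\binom{n}{i}$. What you actually need is split injectivity of $\alpha_d\colon K_d\to G_d$, whose dual is $\varphi_r$, and that is the claim itself.

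Your numerical backup is correct for rows above $r$: regularity forces $K_i$ to cancel completely against $G^\vee_i$ for $i\ge r+1$. But the assertion that $\reg=r$ and $\pdim=n$ kill row $r$ except $(n,n+r)$ does not follow. For example, $I=(x^2,xy,y^3)\subseteq\KK[x,y]$ has Hilbert function $(1,2,1)$, regularity $2$ and projective dimension $2$, yet $\beta_{1,3}(S/I)=1$. Nor can the Hilbert series decide it: in internal degree $2r$ it only gives $\beta_{r,2r}(S/I_\M)-\beta_{r+1,2r}(S/I_\M)=\binom{n}{r}-\beta_{d,2d}(S/J)$, one equation in two unknowns.

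The missing ingredient, which the paper uses, is Cohen--Macaulayness. The maximal shifts $t_i=\max\{j:\beta_{i,j}(S/I_\M)\neq 0\}$ of the Artinian module $S/I_\M$ strictly increase. After the regularity-forced cancellations, homological degree $r+1$ of the cone has nothing above internal degree $2r$ when $r\le n-2$, so $t_r<t_{r+1}\le 2r$. For $r=n-1$ the needed cancellation is dual to $\alpha_1$, which is exactly the liaison fact you quote.

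A uniform alternative is Koszul homology. With $A=S/I_\M$, $\beta_{i,i+r}(A)$ is the cokernel of $\bigwedge^{i+1}\KK^n\otimes A_{r-1}\to\bigwedge^{i}\KK^n\otimes A_r\cong\bigwedge^i\KK^n$. Its image is spanned by contractions with the functionals sending $e_k$ to the scalar $c$ with $y_ky_T=c\,y_Z$, for $|T|=r-1$. These functionals are the indicator vectors of the $(n-r+1)$-subsets, which span $(\KK^n)^\ast$, so the cokernel vanishes for $i<n$. With that in hand, your degree bookkeeping correctly shows that nothing else cancels.
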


\begin{figure}[htb!]
\centering
\resizebox{\textwidth}{!}{%

\begin{tikzpicture}[every node/.style={font=\small}]

\def\w{1.4}
\def\h{1.1}

\def\C{12}
\def\R{11}

\newcommand{\cellcenter}[2]{({(#1+0.5)*\w},{-(#2+0.5)*\h})}
\newcommand{\cellrect}[2]{({#1*\w},{-(#2)*\h}) rectangle ({(#1+1)*\w},{-(#2+1)*\h})}


\foreach \c in {1,2,3,4,5,6,7,8,9,10}{%
  \fill[gray!15] \cellrect{\c}{4};
}

\fill[gray!15] \cellrect{7}{5};
\fill[gray!15] \cellrect{8}{6};
\fill[gray!15] \cellrect{9}{7};
\fill[gray!15] \cellrect{10}{8};
\fill[gray!15] \cellrect{11}{9};
\fill[gray!15] \cellrect{10}{5};

\draw[thick] (0,0) rectangle (\C*\w,-\R*\h);
\foreach \c in {1,...,11} { \draw (\c*\w,0) -- (\c*\w,-\R*\h); }
\foreach \r in {1,...,10} { \draw (0,-\r*\h) -- (\C*\w,-\r*\h); }

\foreach \c/\lab in {
  0/0,
  1/1,
  2/2,
  3/$\cdots$,
  4/$r\!-\!1$,
  5/$r$,
  6/$r\!+\!1$,
  7/$r\!+\!2$,
  8/$\cdots$,
  9/$n\!-\!1$,
  10/$n$,
  11/$n\!+\!1$
}{
  \node at ({(\c+0.5)*\w}, {0.55*\h}) {\lab};
}

\foreach \r/\lab in {
  0/0,
  1/1,
  2/2,
  3/$\vdots$,
  4/$r\!-\!1$,
  5/$r$,
  6/$r\!+\!1$,
  7/$\vdots$,
  8/$n\!-\!2$,
  9/$n\!-\!1$,
  10/$n$
}{
  \node[anchor=east] at ({-0.25*\w}, {-(\r+0.5)*\h}) {\lab};
}


\node at \cellcenter{0}{0} {$1$};
\node at \cellcenter{1}{1} {$\binom{n}{1}$};
\node at \cellcenter{2}{2} {$\binom{n}{2}$};
\node at \cellcenter{4}{4} {};
\node at \cellcenter{5}{5} {$\binom{n}{r}$};
\node at \cellcenter{6}{6} {$\binom{n}{r+1}$};
\node at \cellcenter{8}{8} {$\binom{n}{n-2}$};
\node at \cellcenter{9}{9} {$\binom{n}{n-1}$};
\node at \cellcenter{10}{10} {$\binom{n}{n}$};

\node at \cellcenter{11}{9} {$1$};
\node at \cellcenter{10}{8} {$\binom{n}{1}$};
\node at \cellcenter{7}{5} {$\binom{n}{n-r-1}$};

\node at \cellcenter{9}{7} {$\ddots$};
\node at \cellcenter{8}{6} {$\ddots$};
\node at \cellcenter{7}{7} {$\ddots$};
\node at \cellcenter{3}{3} {$\ddots$};

\node at \cellcenter{10}{5} {$1$};

\node at \cellcenter{10}{4} {$\binom{n}{r-1}\!-\!n$};
\node at \cellcenter{8}{4} {$\cdots$};
\node at \cellcenter{3}{4} {$\cdots$};
\node at \cellcenter{1}{4} {$\binom{n}{r}-1$};

\end{tikzpicture}
}
\caption{Mapping cone construction $G_\bullet$}
\label{fig:bettiTable2}
\end{figure}

\begin{proof} Let $F_\bullet$ denote the minimal free resolution of $S/J$, and $K_\bullet$ the minimal free resolution of $D$. Then, $F_\bullet^\ast(-n)$ resolves $\omega_{S/J}(-n)$; hence 
\[
\beta_{i,j}(\omega_{S/J}(-n)) = \beta_{n-i+1,2n-j}(S/J).
\] 
Let $\varphi_\bullet$ be the lift of the canonical injection $\omega_{S/J}(-n) \cong I_\M/D \to S/D$. Now, $S/I_\M$ is resolved non-minimally by the mapping cone of $F_\bullet^\ast \xrightarrow{\varphi_\bullet} K_\bullet$, which we denote by $G_\bullet$. We depict $G_\bullet$ in \Cref{fig:bettiTable2}, and we shade the cells in gray where $F_\bullet^\ast$ is concentrated.

Using \Cref{HS1}, we observe $\reg_S(S/I_\M) = r$ and so, for $j>i+r$,
\[
\beta_{i,j}(S/I_\M) = 0.
\]
Thus, there must be consecutive cancellations in $G_\bullet$ of the term $K_i$ with $F_{i+1}^\ast$ for $i \geq r+1$.  Since $S/I_\M$ is Cohen-Macaulay, the maximal graded shifts are strictly increasing \cite[Proposition 2.2]{M21}, and therefore there must be consecutive cancellations in $G_\bullet$ of the term $K_r$ with $F_{r+1}^\ast$. To summarize, we have shown that $\beta_{i,i+r}(S/I_\M) = 0$ for $i < n$. After performing these consecutive cancellations, the structure of the resolution is such that no more consecutive cancellations are possible, and thus is the minimal free resolution of $S/I_\M$. The formula for the Betti numbers now follows by recording the remaining Betti numbers.
\end{proof}

\begin{example}
    Take $\M = \U_{6,8}$.  Then the Betti table of $S/I_\M$ is depicted in \Cref{fig:Betti}.
     As verification of the formulas in \Cref{thm:betti-linkage} and \Cref{prop:betti-from-HS}, note the following

     \begin{enumerate}
         \item $\beta_{5,10}(S/I_\M) = \beta_{4,6}(S/J) + \binom{8}{5} = B_4 + \binom{8}{5} = 1470+56 = 1526.$
         \item $\beta_{6,11}(S/I_\M) = \beta_{3,5}(S/J) = B_3 = 1008$.
         \item $\beta_{7,12}(S/I_\M) = \beta_{2,4}(S/J) - \binom{8}{6} = B_2 - \binom{8}{6} = 378-28 = 350$.
     \end{enumerate}

\begin{figure}[htb!]
    $$\begin{tabular}{|r|ccccccccc|}
    \hline
        & 0 & 1 & 2 & 3 & 4 & 5 & 6 & 7 & 8\\
\hline
     0: & 1 & \text{-} & \text{-} & \text{-} & \text{-} & \text{-} & \text{-} & \text{-} & \text{-}\\
     1: & \text{-} & 8 & \text{-} & \text{-} & \text{-} & \text{-} & \text{-} & \text{-} & \text{-}\\
     2: & \text{-} & \text{-} & 28 & \text{-} & \text{-} & \text{-} & \text{-} & \text{-} & \text{-}\\
     3: & \text{-} & \text{-} & \text{-} & 56 & \text{-} & \text{-} & \text{-} & \text{-} & \text{-}\\
     4: & \text{-} & \text{-} & \text{-} & \text{-} & 70 & \text{-} & \text{-} & \text{-} & \text{-}\\
     5: & \text{-} & 27 & 208 & 693 & 1296 & 1526 & 1008 & 350 & 48\\
     6: & \text{-} & \text{-} & \text{-} & \text{-} & \text{-} & \text{-} & \text{-} & \text{-} & 1\\
     \hline
     \end{tabular}$$
     \caption{Betti table for $S/I_{\U_{6,8}}$.}\label{fig:Betti}
\end{figure}

\end{example}

\section{Linearly presented graded M\"obius algebras}\label{linpres}

In this section, we consider consider the question of when the defining ideal of the graded M\"obius algebra of a simple matroid is both quadratic and linearly presented.  Recall that a matroid is called \newWord{Sylvester} if every pair of elements belongs to a 3-element circuit~\cite{Murty70}.

Computing many examples have led us to the following conjecture.

\begin{conjecture}\label{conj:Sylv_iff_lin_pres}
    Let $\M$ be a simple matroid with at least $3$ elements such that $\GMA{\M}$ is quadratic. Then $\GMA{\M}$ is linearly presented   if and only if  $\M$ is Sylvester.
\end{conjecture}

\noindent The uniform matroids $\U_{2,n}$ for $n \ge 3$ are quadratic and Sylvester.  It follows from \Cref{thm:betti-linkage} that $I_{\U_{2,n}}$ are quadratic and linearly presented.  Thus the conjecture holds for these uniform matroids.  One can check with Macaulay2 that small affine and projective geometries, like the Fano matroid, also satisfy the conjecture.  

The following examples show that neither the quadratic nor Sylvester condition alone is sufficient for $\GMA{\M}$ to be both quadratic and linearly presented.

\begin{figure}[htb!]
    \begin{center}
$\begin{tabular}{r|ccccccc}
       & 0 & 1 & 2 & 3 & 4 & 5 & 6\\
       \hline
      0: & 1 & \text{-} & \text{-} & \text{-} & \text{-} & \text{-} & \text{-}\\
      1: & \text{-} & 14 & 29 & 9 & \text{-} & \text{-} & \text{-}\\
      2: & \text{-} & \text{-} & 3 & 41 & 50 & 21 & 1\\
      3: & \text{-} & \text{-} & \text{-} & \text{-} & \text{-} & \text{-} & 1
      \end{tabular}$
    \end{center}
\caption{Betti table for $\GMA{\M(K_4)}$}\label{fig:GMA:K4}
    \end{figure}

\begin{example}
    Let $\M = \M(K_4)$ be the graphic matroid of the complete graph $K_4$ on $4$ vertices.  By \cite[Theorem 5.1]{LMMP24}, 
    $\GMA{\M}$ is quadratic.  However, there are no 3-circuits containing disjoint edges, meaning $\M$ is not Sylvester.  The Betti table in Figure~\ref{fig:GMA:K4} shows that the corresponding graded M\"obius algebra is not linearly presented.
\end{example}

    In fact, the only nontrivial graph whose graphic matroid is Sylvester is the cycle graph $C_3$.  It's graded M\"obius algebra is quadratic and linearly presented.

\begin{example}\label{ex:truncation}
    Let $\M = \mathbb{F}_2^4 \smallsetminus \{\underline{0}\} = \mathbb{P}^3_{\mathbb{F}_2}$.  
    Set $\mathsf{T} = T(\M)$ to be the truncation of $\M$.  Given any two elements $a,b $ of  $\mathsf{T}$, then $\{a,b,a+b\}$ is a circuit of size $3$; thus, $\mathsf{T}$ is Sylvester.  A computation with Macaulay2 shows that $\GMA{\mathsf{T}}$ is not quadratic.
\end{example}

We prove the forward implication of \Cref{conj:Sylv_iff_lin_pres}.
\begin{proposition}\label{prop:linpres:Sylvester}
    If $\M$ is a simple matroid, and $\GMA{\M}$ is quadratic and linearly presented, then $\M$ is Sylvester.
\end{proposition}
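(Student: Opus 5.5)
We argue by contraposition: assuming $\M$ is not Sylvester, we produce a minimal first syzygy of $I_\M$ of degree $4$. If $\M$ is not Sylvester, there are distinct elements $a,b\in[n]$ that lie in no $3$-element circuit, and we use the pair $y_a^2, y_b^2$. Since $\GMA{\M}$ is quadratic, $I_\M$ is generated by $(I_\M)_2$. By the Gr\"obner basis description from \cite[Proposition 3.1]{LMMP24}, $(I_\M)_2$ is spanned by the squares $y_i^2$ together with the binomials $y_{C\setminus i}-y_{C\setminus j}$ for $3$-element circuits $C$ and $i,j\in C$. Each such binomial is a difference of two squarefree quadratic monomials. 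We fix a $\KK$-basis $g_1,\dots,g_m$ of $(I_\M)_2$ consisting of the $y_i^2$ together with a basis $h_1,\dots,h_s$ of the span of these binomials. The squares are linearly independent from the binomial span because the binomials involve only squarefree monomials. This basis is a minimal generating set of $I_\M$. Let $\varphi\colon S(-2)^m\to S$ be the presentation map $e_k\mapsto g_k$. Linear presentation means that $\ker\varphi$ is generated by elements of degree $3$, that is, by vectors with linear-form entries.

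Next we pass to two variables. Let $\pi\colon S\to \KK[y_a,y_b]$ be the $\KK$-algebra map killing every $y_i$ with $i\ne a,b$, and apply $\pi$ entrywise to vectors in $S(-2)^m$. Then $\pi(y_a^2)=y_a^2$, $\pi(y_b^2)=y_b^2$, and $\pi(y_i^2)=0$ otherwise. The key observation is that $\pi$ kills every binomial, and hence every $h_\ell$. Indeed, a monomial $y_{C\setminus i}$ with $C$ a $3$-circuit survives $\pi$ only if $C\setminus i=\{a,b\}$, which would put $a,b$ in a common $3$-circuit. Consequently, for any syzygy $\tau=(c_k)$ we get
\[
\pi(c_{a})\,y_a^2+\pi(c_{b})\,y_b^2=0,
\]
where $c_a,c_b$ denote the coefficients attached to $y_a^2$ and $y_b^2$. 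If $\tau$ is a linear syzygy, then $\pi(c_a),\pi(c_b)$ are linear forms in $\KK[y_a,y_b]$. The only linear syzygy of $(y_a^2,y_b^2)$ over $\KK[y_a,y_b]$ is zero, because $y_a^2,y_b^2$ is a regular sequence and so its syzygies are generated by the Koszul syzygy in degree $4$. Hence $\pi(c_a)=\pi(c_b)=0$ for every linear syzygy $\tau$.

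Finally, consider the Koszul syzygy $\sigma=y_b^2 e_a - y_a^2 e_b\in\ker\varphi$ of degree $4$. If $\ker\varphi$ were generated in degree $3$, then $\sigma=\sum_t \lambda_t\tau_t$ with each $\lambda_t$ a linear form and each $\tau_t$ a linear syzygy. Applying $\pi$ to the $e_a$-coordinate gives
\[
y_b^2=\sum_t \pi(\lambda_t)\,\pi\bigl((\tau_t)_a\bigr)=0,
\]
a contradiction. Therefore $\GMA{\M}$ is not linearly presented, which proves \Cref{prop:linpres:Sylvester}.

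The main point to check carefully is the claim that $\pi$ annihilates all quadratic generators other than $y_a^2,y_b^2$. This requires the explicit form of the degree-$2$ part of $I_\M$, which is guaranteed by quadraticity together with the Gr\"obner description of \cite[Proposition 3.1]{LMMP24}. It also requires that the chosen generating set is minimal, so that linear presentation really forces every syzygy to be an $S$-combination of linear ones.
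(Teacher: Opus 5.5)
Your proof is correct, and it takes a different route from the paper's.

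The paper's route runs through the Koszul complex $K_\bullet(\underline{y};A)$ of $A=\GMA{\M}$:
\begin{itemize}
\item it observes that $y_ay_b\otimes e_a\wedge e_b$ is a cycle, because $y_a^2=y_b^2=0$ in $A$;
\item it shows this cycle is not a boundary: for $f\in A_1\otimes\bigwedge^3\KK^n$, the $e_a\wedge e_b$-component of $\partial(f)$ has the form $\sum\pm\lambda_{u,c}y_uy_c$ with $c\neq a,b$, so equality with $y_ay_b$ in $A$ would put $y_ay_b$ in the support of an element of $[I_\M]_2$;
\item this gives $\beta_{2,4}(A)\neq 0$ directly.
\end{itemize}

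Your route works with the syzygy module of a chosen minimal generating set of $I_\M$ and base-changes along $\pi\colon S\to\KK[y_a,y_b]$. Since $\pi$ annihilates every generator except $y_a^2$ and $y_b^2$, each linear syzygy is sent to a linear syzygy of the complete intersection $(y_a^2,y_b^2)$, which must be zero. Hence the Koszul syzygy $y_b^2e_a-y_a^2e_b$ cannot be an $S$-combination of linear syzygies.

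The combinatorial input is the same in both arguments: $y_ay_b$ occurs in no $3$-circuit binomial, which is exactly why $\pi$ kills the binomials. The two arguments even detect the same nonzero class in $\mathrm{Tor}^S_2(A,\KK)_4$. Chasing your syzygy through the double complex produces precisely the paper's cycle $y_ay_b\otimes e_a\wedge e_b$.

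What each approach buys:
\begin{itemize}
\item \emph{Your version} is more elementary. It needs no identification of Betti numbers with Koszul homology and no sign bookkeeping. The specialization also makes the obstruction transparent, since the presentation collapses to that of $(y_a^2,y_b^2)$.
\item \emph{The paper's version} proves $\beta_{2,4}(A)\neq 0$ for every simple non-Sylvester matroid without choosing generators. It uses the hypothesis ``quadratic and linearly presented'' only at the very end, to force $\beta_{2,4}(A)=0$. Your setup, by contrast, uses quadraticity from the start, to place the whole minimal generating set in degree $2$.
\end{itemize}
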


\begin{proof}
Suppose toward a contradiction that $\M$ is not Sylvester. Then there exist distinct elements $a$, $b$ of $E$ which do not belong to a common $3$-circuit. Thus, $y_a y_b$ does not appear in the monomial support of any of the minimal generators of $[I_\M]_2$. Set $A = \GMA{\M}$, and consider the Koszul complex $K_\bullet(\underline{y};A) := A \otimes_{\KK} \bigwedge {\KK}^n$ on the variables of $S$. Consider the element 
\begin{align*}
    \alpha := y_a y_b \otimes e_a \wedge e_b \in [A]_2 \otimes_{\KK} \bigwedge^2 {\KK}^n = [K_2]_4.
\end{align*}
We observe that $\alpha$ is a cycle. We prove below the claim that $\alpha$ is not a boundary in $K_\bullet(\underline{y};A)$. In which case, $\beta_{2,4}(A) \neq 0$. This contradicts $I_\M$ being both quadratic and linearly presented.

\underline{Claim}: $\alpha$ is not a boundary of $K_\bullet(\underline{y};A)$.

If $\alpha$ was a boundary, then there would exist $f \in [A \otimes \bigwedge^3 S^n]_4 = [A]_1 \otimes \bigwedge^3 S^n$ such that $\partial(f) = \alpha$. If $e_a \wedge e_b$ appears as a  term in $\partial(f)$, then $f$ must have the form   $$f := \sum_{\substack{u,c \in E\\ c \neq a,b}} \lambda_{u,c} y_u \otimes e_a \wedge e_b \wedge e_c + g$$ with $\lambda_{u,c} \in \KK$ for all $u,c$ and the terms of $g$ not divisible by $e_a \wedge e_b$. Applying $\partial$ to $f$, those elements containing $e_a \wedge e_b$ are of the form
\begin{align*}
    \sum_{u,c} (-1)^{\varepsilon(a,b,c)} \lambda_{u,c} y_uy_c \otimes e_a \wedge e_b,
\end{align*}
for some function $\varepsilon(a,b,c)$. Since $\partial(f) = \alpha$, it must be the case that 
\[
\sum_{u,c} (-1)^{\varepsilon(a,b,c)} \lambda_{u,c} y_uy_c = y_a y_b.
\] 
Since $c$ is not equal to $a$ nor $b$, this yields a non-trivial quadratic relation involving $y_ay_b$ in $I_\M$, which we observed previously cannot happen.
\end{proof}

The previous result has implications for the following open question:

\begin{question}[{\cite[Question 7.10]{McCullough21}}] \label{qst:lin:pres}
    Suppose $I_n \subseteq \KK[x_1,\ldots,x_n]$ be a family of graded, quadratic, linearly presented ideals.  Is
    \[\limsup \frac{\reg(I_n)}{\sqrt{n}} < \infty?  \]
\end{question}

The answer is affirmative for quadratic monomial ideals and defining ideals of Veronese and Grassmannian varieties, but the general case remains open.  The previous result allows us to show that the answer is also affirmative for defining ideals of graded M\"obius algebras.

\begin{proposition}\label{prop:lin:pres:reg}
    Let $\M_n$ be a sequence of matroids such that the underlying set has size $|E(\M_n)| = n$ and such that $I_{\M_n}$ is quadratic and linearly presented for all $n$.  Then 
    \[\lim_{n \to \infty} \frac{\reg(I_{\M_n})}{\sqrt{n}} = 0.\]
    In particular, the answer to \Cref{qst:lin:pres} is affirmative in this case.  
\end{proposition}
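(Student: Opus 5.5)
The plan is to bound the regularity of $I_{\M_n}$ by the rank of $\M_n$, and then to show that a Sylvester matroid on $n$ elements must have rank $O(\log n)$, which is $o(\sqrt{n})$.

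First, since $\GMA{\M_n} = S/I_{\M_n}$ is Artinian, $\reg(S/I_{\M_n}) = \deg \HS_{\GMA{\M_n}}(t)$. This degree equals the rank $r_n$ of $\M_n$, because the Hilbert function counts flats of each rank and the top flat has rank $r_n$. Hence $\reg(I_{\M_n}) = r_n + 1$, and it suffices to show that $r_n/\sqrt{n} \to 0$.

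Second, we invoke \Cref{prop:linpres:Sylvester}. For $n \ge 3$ the matroid $\M_n$ is Sylvester, assuming $\M_n$ is simple as in the standing setup of graded M\"obius algebras. We then bound the rank of a Sylvester matroid $\M$ of rank $r$ on $E$ with $|E| = n$. Choose a basis $b_1,\dots,b_r$ and set $F_k = \cl(\{b_1,\dots,b_k\})$, so that $\rk F_k = k$.

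The key claim is that $|F_{k+1}| \ge 2|F_k| + 1$. To see this, put $x = b_{k+1}$. For each $e \in F_k$, the Sylvester property gives a $3$-circuit $\{x, e, e'\}$. Then $e' \in F_{k+1} \setminus F_k$: if $e' \in F_k$, then $x \in \cl(\{e,e'\}) \subseteq F_k$, a contradiction. The map $e \mapsto e'$ is injective. Indeed, if $e_1 \ne e_2$ both map to the same $e'$, then $\{x,e_1,e'\}$ and $\{x,e_2,e'\}$ are distinct circuits, and circuit elimination on $x$ gives a circuit inside $\{e_1,e_2,e'\}$. Since $\M$ is simple, that circuit is all of $\{e_1,e_2,e'\}$, which puts $e' \in \cl(\{e_1,e_2\}) \subseteq F_k$, a contradiction. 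Moreover $x$ itself is not among the $e'$, since $e' \ne x$. So $F_{k+1}$ contains $F_k$, at least $|F_k|$ further elements, and $x$, which gives $|F_{k+1}| \ge 2|F_k| + 1$.

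Starting from $|F_0| = 0$, induction gives $n \ge |F_r| \ge 2^r - 1$, so $r \le \log_2(n+1)$. Therefore $\reg(I_{\M_n}) \le \log_2(n+1) + 1$, and dividing by $\sqrt{n}$ gives a limit of $0$. In particular the limsup in \Cref{qst:lin:pres} is finite for this family.

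The main obstacle is the doubling step: one must check carefully that the partners $e'$ avoid $F_k$ and are pairwise distinct, since both facts rely on simplicity and circuit elimination. The small cases $n \le 2$ are irrelevant to the limit.
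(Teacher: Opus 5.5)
Your argument is correct and follows the same route as the paper: $\reg(I_{\M_n}) = r_n+1$, then the Sylvester property from \Cref{prop:linpres:Sylvester}, then the exponential bound $n \ge 2^{r_n}-1$. The differences are that you prove this bound yourself by the doubling argument instead of citing \cite[Theorem 1]{Murty70}, and that your uniform estimate $r_n \le \log_2(n+1)$ makes the paper's case split on $\limsup r_n$ unnecessary; the doubling argument is sound, including the injectivity step via circuit elimination and simplicity.
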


\begin{proof}
    By \Cref{prop:linpres:Sylvester}, $\M_n$ is Sylvester for all $n$.  Let $r_n = \rk(\M_n)$.    Then $\reg(I_n)= r_n+1$. We may assume that $\limsup r_n = \infty$, since otherwise the numerator is bounded while the denominator tends to infinity. By \cite[Theorem 1]{Murty70}, since $\M_n$ is Sylvester, $n = |E(\M_n)| \ge 2^{r_n}-1$ for all $n$.  Therefore
    \[ 0 \le \lim_{n \to \infty} \frac{\reg(I_{\M_n})}{\sqrt{n}} \le \lim_{r \to \infty} \frac{r+1}{\sqrt{2^r - 1}} = 0. \qedhere
    \]
\end{proof}


\section*{Acknowledgements}

The authors would like to thank the organizers Louiza Fouli, Jonathan Monta\~no, and Michael DiPasquale for organizing the conference and workshop ``Arizona-New Mexico Symposium" where this work was initiated. This conference was supported by the National Science Foundation grants DMS–2401522 and DMS–2344588, the Department of Mathematics at New Mexico State University, and the School of Mathematics and Statistics at Arizona State University.  The authors also thank Chayim Lowen for helpful feedback on an earlier draft of this paper.

Computations in Macaulay2 \cite{M2} were especially  helpful in writing this paper.
ChatGPT insights were used in the finding \Cref{ex:truncation} and proving \Cref{prop:linpres:Sylvester}.  

Kara was partially supported by NSF grant DMS--2418805. LaClair was partially supported by NSF grant DMS-2342256.  Grate and McCullough were partially supported by NSF grant  DMS--2401256. Sobieska was partially supported by NSF grant DMS--2532902.

\bibliographystyle{amsalpha}
\bibliography{ref}

\newcommand{\etalchar}[1]{$^{#1}$}
\providecommand{\bysame}{\leavevmode\hbox to3em{\hrulefill}\thinspace}
\providecommand{\MR}{\relax\ifhmode\unskip\space\fi MR }
\providecommand{\MRhref}[2]{%
  \href{http://www.ams.org/mathscinet-getitem?mr=#1}{#2}
}
\providecommand{\href}[2]{#2}
\begin{thebibliography}{BHM{\etalchar{+}}22}

\bibitem[BHM{\etalchar{+}}20]{singular:Hodge:theory:for:combinatorial:geometries}
Tom Braden, June Huh, Jacob~P. Matherne, Nicholas Proudfoot, and Botong Wang,
  \emph{Singular {H}odge theory for combinatorial geometries}, 2020, arXiv.CO
  2010.06088, to appear in Journal of the American Mathematical Society.

\bibitem[BHM{\etalchar{+}}22]{semi-small:decompositions}
Tom Braden, June Huh, Jacob~P. Matherne, Nicholas Proudfoot, and Botong Wang,
  \emph{A semi-small decomposition of the {C}how ring of a matroid}, Adv. Math.
  \textbf{409} (2022), no.~part A, Paper No. 108646, 49. \MR{4477425}

\bibitem[GS]{M2}
Daniel~R. Grayson and Michael~E. Stillman, \emph{Macaulay2, a software system
  for research in algebraic geometry}, Available at
  \url{http://www2.macaulay2.com}.

\bibitem[LMMP25]{LMMP24}
Adam LaClair, Matthew Mastroeni, Jason McCullough, and Irena Peeva,
  \emph{Koszul graded {M}\"obius algebras and strongly chordal graphs}, Selecta
  Math. (N.S.) \textbf{31} (2025), no.~2, Paper No. 29, 30. \MR{4875052}

\bibitem[McC21a]{M21}
Jason McCullough, \emph{On the maximal graded shifts of ideals and modules}, J.
  Algebra \textbf{571} (2021), 121--133. \MR{4200712}

\bibitem[McC21b]{McCullough21}
\bysame, \emph{Subadditivity of syzygies of ideals and related problems},
  Commutative algebra, Springer, Cham, 2021, pp.~501--522. \MR{4394419}

\bibitem[MN16]{Maeno:Numata:published}
Toshiaki Maeno and Yasuhide Numata, \emph{Sperner property and
  finite-dimensional {G}orenstein algebras associated to matroids}, J. Commut.
  Algebra \textbf{8} (2016), no.~4, 549--570. \MR{3566530}

\bibitem[Mur70]{Murty70}
U.S.R. Murty, \emph{Matroids with {S}ylvester property.}, Aequationes
  mathematicae \textbf{4} (1970), 44--50 (eng).

\bibitem[Oxl11]{Oxley}
James Oxley, \emph{Matroid theory}, second ed., Oxford Graduate Texts in
  Mathematics, vol.~21, Oxford University Press, Oxford, 2011. \MR{2849819}

\bibitem[Pee11]{graded:syzygies}
Irena Peeva, \emph{Graded syzygies}, Algebra and Applications, vol.~14,
  Springer-Verlag London, Ltd., London, 2011. \MR{2560561}

\bibitem[Yuz01]{yuz01}
S~A Yuzvinsky, \emph{Orlik-{S}olomon algebras in algebra and topology}, Russian
  Mathematical Surveys \textbf{56} (2001), no.~2, 293.

\end{thebibliography}
\end{document}